\newtheorem{theorem}{Theorem}
\theoremstyle{plain}
\newtheorem{corollary}{Corollary}
\newtheorem{definition}{Definition}
\newtheorem{remark}{Remark}
\numberwithin{equation}{section}
\begin{document}
\title[On some inequalities of Hadamard Type]{On some inequalities for $m$-
and $\left( \alpha ,m\right) $-logarithmically convex functions}
\author{Mevl\"{u}t TUN\c{C}}
\address{Kilis 7 Aral\i k University, Faculty of Science and Arts,
Department of Mathematics, Kilis, 79000, Turkey.}
\email{mevluttunc@kilis.edu.tr}
\subjclass[2000]{26D10, 26D15}
\keywords{Hadamard's inequality, $m$- and $\left( \alpha ,m\right) $%
-logarithmically convex functions . }

\begin{abstract}
In this paper, we establish some new integral inequalities for for $m$- and $%
\left( \alpha ,m\right) $-logarithmically convex functions.
\end{abstract}

\maketitle

\section{Introduction}

In \cite{bai}, the concepts of $m$- and $\left( \alpha ,m\right) $%
-logarithmically convex functions were introduced as follows.

\begin{definition}
\label{d}A function $f:[0,b]\rightarrow (0,\infty )$ is said to be $m$%
-logarithmically convex if the inequality%
\begin{equation}
f\left( tx+m\left( 1-t\right) y\right) \leq \left[ f\left( x\right) \right]
^{t}\left[ f\left( y\right) \right] ^{m\left( 1-t\right) }  \label{d1}
\end{equation}%
holds for all $x,y\in \lbrack 0,b]$, $m\in (0,1]$, and $t\in \lbrack 0,1]$.
\end{definition}

Obviously, if putting $m=1$ in Definition \ref{d}, then $f$ is just the
ordinary logarithmically convex on $\left[ 0,b\right] $.

\begin{definition}
\label{dd}A function $f:[0,b]\rightarrow (0,\infty )$ is said to be $\left(
\alpha ,m\right) $-logarithmically convex if%
\begin{equation}
f\left( tx+m\left( 1-t\right) y\right) \leq \left[ f\left( x\right) \right]
^{t^{\alpha }}\left[ f\left( y\right) \right] ^{m\left( 1-t^{\alpha }\right)
}  \label{d2}
\end{equation}%
holds for all $x,y\in \lbrack 0,b]$, $\left( \alpha ,m\right) \in \left( 0,1%
\right] \times \left( 0,1\right] ,$ and $t\in \lbrack 0,1]$.
\end{definition}

Clearly, when taking $\alpha =1$ in Definition \ref{dd}, then $f$ becomes
the standard $m$-logarithmically convex function on $\left[ 0,b\right] $.

In \cite{dr}, authors proved that the following inequalities of
Hermite-Hadamard type hold for $log$-convex functions:

\begin{theorem}
Let $f:I\rightarrow \left[ 0,\infty \right) $ be a log-convex mapping on $I$
and $a,b\in I$ with $a<b$. Then one has the inequality:%
\begin{equation}
f\left( A\left( a,b\right) \right) \leq \frac{1}{b-a}\int_{a}^{b}G\left(
f\left( x\right) ,f\left( a+b-x\right) \right) dx\leq G\left( f\left(
a\right) ,f\left( b\right) \right) .  \label{dr1}
\end{equation}
\end{theorem}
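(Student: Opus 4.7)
The plan is to treat the two inequalities separately, exploiting the symmetry of the substitution $x \mapsto a+b-x$ in combination with the log-convexity of $f$.

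For the left-hand inequality, I would observe that for any $x \in [a,b]$, the points $x$ and $a+b-x$ have arithmetic mean exactly $A(a,b) = (a+b)/2$. Applying the log-convexity of $f$ with parameter $t=1/2$ to this pair yields
\begin{equation*}
f\bigl(A(a,b)\bigr) \leq \bigl[f(x)\bigr]^{1/2}\bigl[f(a+b-x)\bigr]^{1/2} = G\bigl(f(x), f(a+b-x)\bigr).
\end{equation*}
Since this holds pointwise in $x$, integrating both sides over $[a,b]$ and dividing by $b-a$ gives the first inequality in \eqref{dr1}.

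For the right-hand inequality, I would parametrize $x = (1-t)a + tb$ with $t = (x-a)/(b-a) \in [0,1]$, which forces $a+b-x = ta + (1-t)b$. Two applications of log-convexity then give $f(x) \leq [f(a)]^{1-t}[f(b)]^t$ and $f(a+b-x) \leq [f(a)]^{t}[f(b)]^{1-t}$; multiplying these two bounds causes the exponents to collapse to $1$, so that
\begin{equation*}
f(x)\,f(a+b-x) \leq f(a)\,f(b),
\end{equation*}
whence $G(f(x), f(a+b-x)) \leq G(f(a), f(b))$ pointwise. Integrating yields the second inequality in \eqref{dr1}.

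There is no genuine obstacle here: the argument is essentially a symmetrized midpoint/endpoint application of log-convexity. The only thing to be careful about is making sure the exponents in the product bound cancel correctly, which is the minor algebraic step that makes the geometric-mean upper bound work out as a constant independent of $x$.
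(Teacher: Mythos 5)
Your proof is correct: the midpoint step ($t=1/2$ applied to the symmetric pair $x$ and $a+b-x$) and the endpoint step (multiplying the two log-convexity bounds so the exponents sum to $1$, then taking square roots) both go through without any gaps. The paper itself only quotes this theorem from Dragomir and Mond without reproducing a proof, but your argument is essentially the same symmetrization technique the paper uses to prove its own $m$-logarithmically convex analogue (multiplying the estimates at $ta+(1-t)b$ and $tb+(1-t)a$, taking square roots, and integrating), so there is nothing further to reconcile.
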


\begin{theorem}
\bigskip Let $f:I\rightarrow \left( 0,\infty \right) $ be a log-convex
mapping on $I$ and $a,b\in I$ with $a<b$. Then one has the inequality:%
\begin{eqnarray}
f\left( \frac{a+b}{2}\right) &\leq &\exp \left[ \frac{1}{b-a}\int_{a}^{b}\ln %
\left[ f\left( x\right) \right] dx\right]  \label{dr2} \\
&\leq &\frac{1}{b-a}\int_{a}^{b}G\left( f\left( x\right) ,f\left(
a+b-x\right) \right) dx\leq \frac{1}{b-a}\int_{a}^{b}f\left( x\right) dx 
\notag \\
&\leq &L\left( f\left( a\right) ,f\left( b\right) \right) \leq \frac{f\left(
a\right) +f\left( b\right) }{2}  \notag
\end{eqnarray}%
where $G(p,q):=\sqrt{pq}$ is the geometric mean and $L(p,q):=\frac{p\text{-}q%
}{\ln p-\ln q}$ $(p\neq q)$ is the logarithmic mean of the strictly positive
real numbers $p,q,$ i.e., 
\begin{equation*}
L\left( p,q\right) =\frac{p-q}{\ln p-\ln q}\text{ }if\text{ }p\neq q\text{
and }L(p,p)=p.
\end{equation*}
\end{theorem}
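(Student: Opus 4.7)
The plan is to establish the chain of inequalities from left to right, exploiting the fact that $\ln f$ is convex (since $f$ is log-convex), and invoking standard inequalities (Jensen, AM--GM, the substitution $x\mapsto a+b-x$) at each link.

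For the first inequality, I would apply the classical Hermite--Hadamard inequality to the convex function $\ln f$, which gives $\ln f\!\left(\tfrac{a+b}{2}\right) \le \tfrac{1}{b-a}\int_a^b \ln f(x)\,dx$, and then exponentiate. For the second inequality, the key observation is the symmetry identity $\int_a^b \ln f(x)\,dx = \int_a^b \ln f(a+b-x)\,dx$, which can be established by the substitution $u=a+b-x$. Averaging these two expressions rewrites the integrand as $\tfrac12\bigl(\ln f(x)+\ln f(a+b-x)\bigr) = \ln G(f(x),f(a+b-x))$. Then Jensen's inequality applied to the convex function $\exp$ yields
\[
\exp\!\left[\frac{1}{b-a}\int_a^b \ln G(f(x),f(a+b-x))\,dx\right] \le \frac{1}{b-a}\int_a^b G(f(x),f(a+b-x))\,dx.
\]
The third inequality follows termwise from AM--GM: $G(f(x),f(a+b-x))\le \tfrac12\bigl(f(x)+f(a+b-x)\bigr)$, integrated over $[a,b]$ and combined with the symmetry substitution once more.

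For the fourth inequality, I would parametrize $x=ta+(1-t)b$ with $t\in[0,1]$, so that log-convexity of $f$ gives $f(x)\le f(a)^t f(b)^{1-t}$. The integral $\int_0^1 f(a)^t f(b)^{1-t}\,dt$ evaluates by direct antidifferentiation to $\tfrac{f(a)-f(b)}{\ln f(a)-\ln f(b)}=L(f(a),f(b))$ (assuming $f(a)\neq f(b)$; the equal case is trivial by continuity). The last inequality $L(p,q)\le \tfrac{p+q}{2}$ is the standard logarithmic-mean--arithmetic-mean inequality, which one can derive from the concavity of $\ln$ or by a short direct computation.

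I do not anticipate any serious obstacle: each link is a short application of a well-known inequality. The most delicate step is the second, where one must spot that the symmetrization trick converts the logarithmic integral into an integral of $\ln G(\cdot,\cdot)$ so that Jensen's inequality can be applied in the direction needed. The fourth step also requires the minor care of treating the degenerate case $f(a)=f(b)$ separately, but this is handled by continuity of $L$.
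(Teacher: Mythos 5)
Your proof is correct: each link in the chain goes through exactly as you describe (Hermite--Hadamard applied to the convex function $\ln f$ for the first inequality, the symmetrization $x\mapsto a+b-x$ combined with Jensen for $\exp$ for the second, pointwise AM--GM for the third, the parametrization $x=ta+(1-t)b$ and direct integration of $f(a)^{t}f(b)^{1-t}$ for the fourth, and the standard $L\leq A$ inequality for the last). Note that the paper itself gives no proof of this theorem --- it is stated as a quoted result from Dragomir and Mond \cite{dr} --- so there is nothing internal to compare against; your argument is the standard route and is essentially the one in the cited source.
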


The main purpose of this paper is to prove some inequalities of Hadamard
type for $m$- and $\left( \alpha ,m\right) $-logarithmically convex
functions. Also we give some results for logarithmically convex functions.

\section{mean result}

We now consider the following means will be used in this paper.

a) The arithmetic mean:%
\begin{equation*}
A=A\left( a,b\right) :=\frac{a+b}{2},\text{\ \ }a,b\geq 0,
\end{equation*}

b) The geometric mean: 
\begin{equation*}
G=G\left( a,b\right) :=\sqrt{ab},\text{ \ }a,b\geq 0,
\end{equation*}

c) The logarithmic mean:

\begin{equation*}
L=L\left( a,b\right) :=\left\{ 
\begin{array}{l}
a\text{ \ \ \ \ \ \ \ \ \ \ \ \ \ if \ \ }a=b \\ 
\frac{b-a}{\ln b-\ln a}\text{ \ \ \ \ \ if \ \ }a\neq b%
\end{array}%
\right. ,\text{ \ }a,b\geq 0.
\end{equation*}

In this section, some Hadamard type inequalities for $m$- and $\left( \alpha
,m\right) $-logarithmically convex functions will be given.

\begin{theorem}
Let $f:\left[ 0,\infty \right) \rightarrow \left( 0,\infty \right) $ be a $m$%
-logarithmically convex function on $\left[ 0,\frac{b}{m}\right] $ with $%
a,b\in \left[ 0,\infty \right) ,$ $a<b,$ $m\in \left( 0,1\right] .$ Then%
\begin{equation}
\frac{1}{b-a}\int_{a}^{b}f\left( x\right) dx\leq \min \{L\left( f\left(
a\right) ,f\left( \frac{b}{m}\right) ^{m}\right) ,L\left( f\left( b\right)
,f\left( \frac{a}{m}\right) ^{m}\right) \},  \label{4}
\end{equation}%
where $L\left( a,b\right) $ is logarithmic mean.
\end{theorem}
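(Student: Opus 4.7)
The plan is to prove the two upper bounds separately and then take the minimum. For the first bound $L(f(a),f(b/m)^m)$, I would use the substitution $x = ta + (1-t)b$ for $t \in [0,1]$, noting that this can be rewritten as $x = ta + m(1-t)(b/m)$, which is precisely the convex combination appearing on the left hand side of the $m$-logarithmic convexity inequality \eqref{d1} with $y = b/m$ (and $b/m$ lies in the domain $[0,b/m]$ on which $f$ is $m$-log-convex). Applying the definition yields
\begin{equation*}
f\bigl(ta+(1-t)b\bigr) \leq [f(a)]^{t}\,[f(b/m)]^{m(1-t)}.
\end{equation*}

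Next, I would integrate both sides in $t$ over $[0,1]$. A change of variables (with $dx = (b-a)\,dt$ in the appropriate direction) shows that
\begin{equation*}
\frac{1}{b-a}\int_{a}^{b} f(x)\,dx = \int_{0}^{1} f\bigl(ta+(1-t)b\bigr)\,dt \leq \int_{0}^{1} [f(a)]^{t}\,\bigl[f(b/m)^{m}\bigr]^{1-t}\,dt.
\end{equation*}
Setting $A = f(a)$ and $B = f(b/m)^{m}$, the remaining integral is the standard computation
\begin{equation*}
\int_{0}^{1} A^{t}B^{1-t}\,dt = B\int_{0}^{1}(A/B)^{t}\,dt = \frac{A-B}{\ln A - \ln B} = L(A,B),
\end{equation*}
valid when $A \neq B$ (the equal case being trivial by continuity). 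This produces the bound $L(f(a),f(b/m)^{m})$.

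For the second bound $L(f(b),f(a/m)^m)$, I would repeat the argument with the roles swapped: write $x = tb + m(1-t)(a/m) = tb + (1-t)a$, apply \eqref{d1} with $y = a/m$ to obtain $f(x) \leq [f(b)]^{t}[f(a/m)]^{m(1-t)}$, and integrate exactly as before. Taking the minimum of the two upper bounds yields \eqref{4}. There is no real obstacle here; the only point that requires mild care is verifying that the substitution keeps the second argument of $f$ inside the domain $[0,b/m]$ of $m$-log-convexity (which is automatic since $a/m, b/m \in [0,b/m]$ when $m \in (0,1]$ and $0 \leq a < b$), and handling the degenerate case $A=B$ by continuity of $L$.
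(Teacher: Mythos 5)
Your proposal is correct and follows essentially the same route as the paper: apply the $m$-log-convexity inequality with the pairs $(a,b/m)$ and $(b,a/m)$, integrate over $t\in[0,1]$, and evaluate $\int_0^1 A^tB^{1-t}\,dt=L(A,B)$ to obtain each bound before taking the minimum. Your explicit remarks on the domain check and the degenerate case $A=B$ are minor additions the paper omits, but the argument is the same.
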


\begin{proof}
Since $f$ is $m$-logarithmically convex function on $\left[ 0,\frac{b}{m}%
\right] $, we have that 
\begin{equation}
f\left( ta+m\left( 1-t\right) \frac{b}{m}\right) \leq \left[ f\left(
a\right) \right] ^{t}\left[ f\left( \frac{b}{m}\right) \right] ^{m\left(
1-t\right) }  \label{m}
\end{equation}%
and%
\begin{equation}
f\left( tb+m\left( 1-t\right) \frac{a}{m}\right) \leq \left[ f\left(
b\right) \right] ^{t}\left[ f\left( \frac{a}{m}\right) \right] ^{m\left(
1-t\right) }  \label{x}
\end{equation}%
for all $t\in \left[ 0,1\right] .$ By integrating the resulting inequality
on $\left[ 0,1\right] $ with respect to $t$, we get%
\begin{eqnarray*}
\int_{0}^{1}f\left( ta+\left( 1-t\right) b\right) dt &\leq &f\left( \frac{b}{%
m}\right) ^{m}\int_{0}^{1}\left( \frac{f\left( a\right) }{f\left( \frac{b}{m}%
\right) }\right) ^{t}dt, \\
\int_{0}^{1}f\left( tb+\left( 1-t\right) a\right) dt &\leq &f\left( \frac{a}{%
m}\right) ^{m}\int_{0}^{1}\left( \frac{f\left( b\right) }{f\left( \frac{a}{m}%
\right) }\right) ^{t}dt.
\end{eqnarray*}%
However,%
\begin{eqnarray*}
\int_{0}^{1}\left( \frac{f\left( a\right) }{f\left( \frac{b}{m}\right) }%
\right) ^{t}dt &=&\frac{\frac{f\left( a\right) }{f\left( \frac{b}{m}\right)
^{m}}-1}{\ln f\left( a\right) -\ln f\left( \frac{b}{m}\right) ^{m}} \\
&=&\frac{f\left( a\right) -f\left( \frac{b}{m}\right) ^{m}}{f\left( \frac{b}{%
m}\right) ^{m}\left[ \ln f\left( a\right) -\ln f\left( \frac{b}{m}\right)
^{m}\right] } \\
&=&\frac{1}{f\left( \frac{b}{m}\right) ^{m}}L\left( f\left( a\right)
,f\left( \frac{b}{m}\right) ^{m}\right) ,
\end{eqnarray*}%
\begin{equation*}
\int_{0}^{1}\left( \frac{f\left( a\right) }{f\left( \frac{b}{m}\right) ^{m}}%
\right) ^{t}dt=\frac{1}{f\left( \frac{a}{m}\right) ^{m}}L\left( f\left(
b\right) ,f\left( \frac{a}{m}\right) ^{m}\right)
\end{equation*}%
and%
\begin{equation*}
\int_{0}^{1}f\left( ta+\left( 1-t\right) b\right) dt=\int_{0}^{1}f\left(
tb+\left( 1-t\right) a\right) dt=\frac{1}{b-a}\int_{a}^{b}f\left( x\right) dx
\end{equation*}%
and the inequality (\ref{4}) is obtained. Which completes the proof.
\end{proof}

\begin{theorem}
Let $f:\left[ 0,\infty \right) \rightarrow \left( 0,\infty \right) $ be a $m$%
-logarithmically convex function on $\left[ 0,\frac{b}{m}\right] $ with $%
a,b\in \left[ 0,\infty \right) ,$ $a<b,$ $m\in \left( 0,1\right] .$ Then%
\begin{equation}
f\left( \frac{a+b}{2}\right) \leq \frac{1}{b-a}\int_{a}^{b}G\left( f\left(
x\right) ,\left[ f\left( \frac{a+b-x}{m}\right) \right] ^{m}\right)
\label{11}
\end{equation}%
and%
\begin{equation}
\frac{1}{b-a}\int_{0}^{1}G\left( f\left( x\right) ,f\left( a+b-x\right)
\right) dx\leq L\left( f\left( a\right) f\left( b\right) ,\left[ f\left( 
\frac{a}{m}\right) f\left( \frac{b}{m}\right) \right] ^{m}\right)  \label{22}
\end{equation}%
where $G\left( a,b\right) ,$ $L\left( a,b\right) $ are geometric,
logarithmic mean respectively.
\end{theorem}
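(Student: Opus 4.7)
The plan is to establish both inequalities by the same recipe used in the preceding theorem: apply Definition~\ref{d} to get a pointwise upper bound in a parameter $t\in[0,1]$, integrate over $t$, and then use the linear substitution $x=ta+(1-t)b$ to convert the integral over $[0,1]$ into an integral over $[a,b]$.

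For $(\ref{11})$ the algebraic key is the identity
\[
\tfrac{a+b}{2}=\tfrac{1}{2}\bigl(ta+(1-t)b\bigr)+m\cdot\tfrac{1}{2}\cdot\tfrac{(1-t)a+tb}{m},
\]
which holds for every $t\in[0,1]$. Applying Definition~\ref{d} with $x=ta+(1-t)b$, $y=\tfrac{(1-t)a+tb}{m}$ and parameter $\tfrac{1}{2}$ gives
\[
f\!\left(\tfrac{a+b}{2}\right)\leq\bigl[f(ta+(1-t)b)\bigr]^{1/2}\bigl[f(\tfrac{(1-t)a+tb}{m})\bigr]^{m/2}.
\]
Integrating this in $t\in[0,1]$ and substituting $x=ta+(1-t)b$ (so $a+b-x=(1-t)a+tb$) turns the right-hand side into $\tfrac{1}{b-a}\int_a^b G\bigl(f(x),[f(\tfrac{a+b-x}{m})]^m\bigr)\,dx$, which is $(\ref{11})$.

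For $(\ref{22})$ I reuse the two pointwise bounds $(\ref{m})$ and $(\ref{x})$ from the previous proof. Multiplying them and extracting a square root yields
\[
G\bigl(f(ta+(1-t)b),f(tb+(1-t)a)\bigr)\leq p^{t/2}q^{(1-t)/2},
\]
where $p=f(a)f(b)$ and $q=[f(\tfrac{a}{m})f(\tfrac{b}{m})]^{m}$. Integrating over $t\in[0,1]$ and using $x=ta+(1-t)b$ on the left converts the left-hand side into $\tfrac{1}{b-a}\int_a^b G(f(x),f(a+b-x))\,dx$, while the right-hand side is an elementary exponential integral that evaluates in closed form to a logarithmic mean of $p$ and $q$.

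The only nontrivial part is the evaluation of this exponential integral and the bookkeeping needed to identify its value with the logarithmic mean $L\bigl(f(a)f(b),[f(\tfrac{a}{m})f(\tfrac{b}{m})]^{m}\bigr)$ on the right of $(\ref{22})$; this essentially repeats the computation of $\int_0^1 (f(a)/f(\tfrac{b}{m}))^{t}\,dt$ from the previous proof just after $(\ref{4})$.
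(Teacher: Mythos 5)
Your argument for (\ref{11}) is correct and is essentially the paper's own: the midpoint identity $\frac{a+b}{2}=\frac{1}{2}\left(ta+(1-t)b\right)+m\cdot \frac{1}{2}\cdot \frac{(1-t)a+tb}{m}$, Definition \ref{d} applied with parameter $\frac{1}{2}$, then integration in $t$ and the substitution $x=ta+(1-t)b$. (The paper writes $f\left( \frac{ta+(1-t)b}{m}\right) $ where it means $f\left( \frac{tb+(1-t)a}{m}\right) $; your version has this right.)

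For (\ref{22}) there is a genuine gap at exactly the step you defer as ``bookkeeping''. Your pointwise bound $G\left( f(ta+(1-t)b),f(tb+(1-t)a)\right) \leq p^{t/2}q^{(1-t)/2}$ is the correct consequence of multiplying (\ref{m}) and (\ref{x}) and taking square roots, but its integral is
\begin{equation*}
\int_{0}^{1}p^{t/2}q^{(1-t)/2}\,dt=\frac{\sqrt{p}-\sqrt{q}}{\ln \sqrt{p}-\ln
\sqrt{q}}=L\left( \sqrt{p},\sqrt{q}\right) ,
\end{equation*}
the logarithmic mean of $\sqrt{p}$ and $\sqrt{q}$, not of $p$ and $q$. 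Since $L(p,q)=L\left( \sqrt{p},\sqrt{q}\right) \cdot \frac{\sqrt{p}+\sqrt{q}}{2}$, passing from your bound to the stated right-hand side of (\ref{22}) would require $\sqrt{p}+\sqrt{q}\geq 2$, which is not among the hypotheses; so the identification you postpone cannot be carried out. The paper's own proof reaches $L(p,q)$ only because it ``takes square roots'' on the left while leaving the right-hand side as $p^{t}q^{1-t}$, and in fact (\ref{22}) as stated fails for $f\equiv c$ with $0<c<1$ and $m=1$: the left side is $c$ while the right side is $L(c^{2},c^{2})=c^{2}<c$. What your (correct) computation actually establishes is the inequality with $L\left( \sqrt{f(a)f(b)},\left[ f\left( \frac{a}{m}\right) f\left( \frac{b}{m}\right) \right] ^{m/2}\right) $ on the right.
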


\begin{proof}
Since $f$ is $m$-log-convex, we have that%
\begin{eqnarray}
f\left( ta+m\left( 1-t\right) \frac{b}{m}\right) &\leq &\left[ f\left(
a\right) \right] ^{t}\left[ f\left( \frac{b}{m}\right) \right] ^{m\left(
1-t\right) },  \label{p} \\
f\left( tb+m\left( 1-t\right) \frac{a}{m}\right) &\leq &\left[ f\left(
b\right) \right] ^{t}\left[ f\left( \frac{a}{m}\right) \right] ^{m\left(
1-t\right) }  \notag
\end{eqnarray}%
for all $t\in \left[ 0,1\right] $. If we multiply the above inequalities and
take square roots, we obtain%
\begin{equation*}
G\left( f\left( ta+\left( 1-t\right) b\right) ,f\left( tb+\left( 1-t\right)
a\right) \right) \leq \left[ f\left( a\right) f\left( b\right) \right] ^{t}%
\left[ f\left( \frac{a}{m}\right) f\left( \frac{b}{m}\right) \right]
^{m\left( 1-t\right) }
\end{equation*}%
for all $t\in \left[ 0,1\right] $. Integrating this inequality on $[0,1]$
with respect to $t$, we obtain%
\begin{eqnarray*}
&&\int_{0}^{1}G\left( f\left( ta+\left( 1-t\right) b\right) ,f\left(
tb+\left( 1-t\right) a\right) \right) dt \\
&\leq &\left[ f\left( \frac{a}{m}\right) f\left( \frac{b}{m}\right) \right]
^{m}\int_{0}^{1}\left[ \frac{f\left( a\right) f\left( b\right) }{\left[
f\left( \frac{a}{m}\right) f\left( \frac{b}{m}\right) \right] ^{m}}\right]
^{t}dt \\
&=&\left[ f\left( \frac{a}{m}\right) f\left( \frac{b}{m}\right) \right] ^{m}%
\frac{\frac{f\left( a\right) f\left( b\right) }{\left[ f\left( \frac{a}{m}%
\right) f\left( \frac{b}{m}\right) \right] ^{m}}-1}{\ln \frac{f\left(
a\right) f\left( b\right) }{\left[ f\left( \frac{a}{m}\right) f\left( \frac{b%
}{m}\right) \right] ^{m}}} \\
&=&\frac{f\left( a\right) f\left( b\right) -\left[ f\left( \frac{a}{m}%
\right) f\left( \frac{b}{m}\right) \right] ^{m}}{\ln f\left( a\right)
f\left( b\right) -\ln \left[ f\left( \frac{a}{m}\right) f\left( \frac{b}{m}%
\right) \right] ^{m}} \\
&=&L\left( f\left( a\right) f\left( b\right) ,\left[ f\left( \frac{a}{m}%
\right) f\left( \frac{b}{m}\right) \right] ^{m}\right)
\end{eqnarray*}%
If we change the variable $x=ta+\left( 1-t\right) b$, $t\in \left[ 0,1\right]
$, we get%
\begin{equation*}
\int_{0}^{1}G\left( f\left( ta+\left( 1-t\right) b\right) ,f\left( tb+\left(
1-t\right) a\right) \right) dt=\frac{1}{b-a}\int_{0}^{1}G\left( f\left(
x\right) ,f\left( a+b-x\right) \right) dx
\end{equation*}%
and the inequality in (\ref{22}) is proved.

By (\ref{p}), for $t=\frac{1}{2},$ we have that%
\begin{equation*}
f\left( \frac{x+y}{2}\right) \leq \sqrt{\left[ f\left( x\right) \right] %
\left[ f\left( \frac{y}{m}\right) \right] ^{m}}
\end{equation*}%
for all $x,y\in \left[ 0,\infty \right) $. If we choose $x=ta+\left(
1-t\right) b$, $y=tb+\left( 1-t\right) a$, we get the inequality%
\begin{equation*}
f\left( \frac{a+b}{2}\right) \leq \sqrt{\left[ f\left( ta+\left( 1-t\right)
b\right) \right] \left[ f\left( \frac{ta+\left( 1-t\right) b}{m}\right) %
\right] ^{m}}
\end{equation*}%
for all $t\in \left[ 0,1\right] $. Integrating this inequality on $\left[ 0,1%
\right] $ over $t$, we obtain the inequality in (\ref{11}) . This completes
the proof of the theorem.
\end{proof}

\begin{remark}
If we take $m=1$ in inequality (\ref{11}) and (\ref{22}), we obtain one
inequality such that special version of inequality (\ref{dr1}).
\end{remark}

\begin{theorem}
Let $f:\left[ 0,\infty \right) \rightarrow \left( 0,\infty \right) $ be an $%
\left( \alpha ,m\right) $-logarithmically convex function on $\left[ 0,\frac{%
b}{m}\right] $ with $a,b\in \left[ 0,\infty \right) ,$ $0\leq a<b<\infty ,$ $%
\left( \alpha ,m\right) \in \left( 0,1\right] \times \left( 0,1\right] .$
Then%
\begin{equation}
\frac{1}{b-a}\int_{a}^{b}f\left( x\right) dx\leq \min \{f\left( \frac{b}{m}%
\right) ^{m}M\left( \alpha \right) ,\text{ }f\left( \frac{a}{m}\right)
^{m}T\left( \alpha \right) \},  \label{31}
\end{equation}%
where%
\begin{equation*}
\begin{array}{cc}
\varphi =\frac{f\left( a\right) }{f\left( \frac{b}{m}\right) ^{m}}, & \ell =%
\frac{f\left( b\right) }{f\left( \frac{a}{m}\right) ^{m}}%
\end{array}%
\end{equation*}
and%
\begin{equation}
M\left( \alpha \right) =\left\{ 
\begin{array}{cc}
1 & ,\varphi =1 \\ 
\frac{\varphi ^{\alpha }-1}{\alpha \ln \varphi } & ,0<\varphi <1%
\end{array}%
\right. \text{ and }T\left( \alpha \right) =\left\{ 
\begin{array}{cc}
1 & ,\ell =1 \\ 
\frac{\ell ^{\alpha }-1}{\alpha \ln \ell } & ,0<\ell <1%
\end{array}%
\right.  \label{34}
\end{equation}
\end{theorem}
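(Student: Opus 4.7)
The plan is to mirror the proof of the preceding theorem (the $m$-log-convex case), substituting the $(\alpha,m)$-log-convexity bound of Definition \ref{dd} for the $m$-log-convexity one. First I apply (\ref{d2}) at the pairs $(x,y) = (a, b/m)$ and $(x,y) = (b, a/m)$ to obtain, for every $t \in [0,1]$,
\begin{align*}
f(ta + (1-t)b) &\le [f(a)]^{t^\alpha} \bigl[f(b/m)\bigr]^{m(1 - t^\alpha)}, \\
f(tb + (1-t)a) &\le [f(b)]^{t^\alpha} \bigl[f(a/m)\bigr]^{m(1 - t^\alpha)}.
\end{align*}
Factoring $f(b/m)^m$ (respectively $f(a/m)^m$) out of the right-hand sides rewrites these as $f(b/m)^m \varphi^{t^\alpha}$ and $f(a/m)^m \ell^{t^\alpha}$. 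Integrating over $t \in [0,1]$ and using the same change of variable $x = ta + (1-t)b$ that appeared in the previous proofs, each left-hand side becomes $\frac{1}{b-a}\int_a^b f(x)\,dx$.

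The crux of the argument is then to bound $\int_0^1 \varphi^{t^\alpha}\,dt$ (and the analogous $\ell$-integral) by the closed form $M(\alpha)$ from (\ref{34}). The key auxiliary inequality is $t^\alpha \ge \alpha t$ for $t \in [0,1]$ and $\alpha \in (0,1]$, which follows from concavity of $s \mapsto s^\alpha$ together with equality at the endpoints $t = 0, 1$. In the nontrivial case $0 < \varphi < 1$ we have $\ln \varphi < 0$, so $t^\alpha \ln \varphi \le \alpha t \ln \varphi$ and hence $\varphi^{t^\alpha} \le \varphi^{\alpha t}$, which gives
\begin{equation*}
\int_0^1 \varphi^{t^\alpha}\,dt \;\le\; \int_0^1 \varphi^{\alpha t}\,dt \;=\; \frac{\varphi^\alpha - 1}{\alpha \ln \varphi} \;=\; M(\alpha).
\end{equation*}
The degenerate case $\varphi = 1$ is trivial ($M(\alpha) = 1$), and the same argument with $\ell$ in place of $\varphi$ produces $T(\alpha)$.

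Combining the two bounds and taking the minimum yields (\ref{31}). I expect the main obstacle to be the fact that $\int_0^1 \varphi^{t^\alpha}\,dt$ admits no elementary antiderivative; the resolution is to replace the exponent $t^\alpha$ by its linear lower bound $\alpha t$ (which preserves the direction of the inequality precisely because $\varphi \le 1$) and integrate, thereby producing the exact closed form $M(\alpha)$ stated in the theorem.
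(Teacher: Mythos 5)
Your proposal is correct and follows essentially the same route as the paper: apply the $(\alpha,m)$-log-convexity bound at the pairs $(a,b/m)$ and $(b,a/m)$, integrate, and estimate $\int_0^1\varphi^{t^{\alpha}}\,dt$ by $\int_0^1\varphi^{\alpha t}\,dt=\frac{\varphi^{\alpha}-1}{\alpha\ln\varphi}$ via $t^{\alpha}\geq\alpha t$ and $\ln\varphi<0$. The only difference is that you spell out the justification for $\varphi^{t^{\alpha}}\leq\varphi^{\alpha t}$, which the paper leaves implicit.
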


\begin{proof}
Since $f$ is an $\left( \alpha ,m\right) $-logarithmically convex function
on $\left[ 0,\frac{b}{m}\right] $, we have that 
\begin{equation*}
f\left( ta+m\left( 1-t\right) \frac{b}{m}\right) \leq \left[ f\left(
a\right) \right] ^{t^{\alpha }}\left[ f\left( \frac{b}{m}\right) \right]
^{m\left( 1-t^{\alpha }\right) }
\end{equation*}%
and%
\begin{equation*}
f\left( tb+m\left( 1-t\right) \frac{a}{m}\right) \leq \left[ f\left(
b\right) \right] ^{t^{\alpha }}\left[ f\left( \frac{a}{m}\right) \right]
^{m\left( 1-t^{\alpha }\right) }
\end{equation*}%
for all $t\in \left[ 0,1\right] .$ Integrating the above inequality on $%
[0,1] $ with respect to $t$, we get%
\begin{eqnarray*}
\int_{0}^{1}f\left( ta+m\left( 1-t\right) \frac{b}{m}\right) dt &\leq
&f\left( \frac{b}{m}\right) ^{m}\int_{0}^{1}\left( \frac{f\left( a\right) }{%
f\left( \frac{b}{m}\right) }\right) ^{t^{\alpha }}dt, \\
\int_{0}^{1}f\left( tb+m\left( 1-t\right) \frac{a}{m}\right) dt &\leq
&f\left( \frac{a}{m}\right) ^{m}\int_{0}^{1}\left( \frac{f\left( b\right) }{%
f\left( \frac{a}{m}\right) }\right) ^{t^{\alpha }}dt.
\end{eqnarray*}%
When $\varphi =1$, we have 
\begin{equation*}
\int_{0}^{1}\varphi ^{t^{\alpha }}dt=1.
\end{equation*}
When $\varphi <1$, we have%
\begin{equation*}
\int_{0}^{1}\varphi ^{t^{\alpha }}dt\leq \int_{0}^{1}\varphi ^{\alpha t}dt=%
\frac{\varphi ^{\alpha }-1}{\alpha \ln \varphi }.
\end{equation*}%
When $\ell =1$, we have 
\begin{equation*}
\int_{0}^{1}\ell ^{t^{\alpha }}dt=1.
\end{equation*}
When $\ell <1$, we have%
\begin{equation*}
\int_{0}^{1}\ell ^{t^{\alpha }}dt\leq \int_{0}^{1}\ell ^{\alpha t}dt=\frac{%
\ell ^{\alpha }-1}{\alpha \ln \ell }.
\end{equation*}%
and%
\begin{equation*}
\int_{0}^{1}f\left( ta+\left( 1-t\right) b\right) dt=\int_{0}^{1}f\left(
tb+\left( 1-t\right) a\right) dt=\frac{1}{b-a}\int_{a}^{b}f\left( x\right) dx
\end{equation*}%
and the inequality (\ref{31}) is obtained. Which is required.
\end{proof}

\begin{corollary}
Let $f:\left[ 0,\infty \right) \rightarrow \left( 0,\infty \right) $ be an $%
m $-logarithmically convex function on $\left[ 0,\frac{b}{m}\right] $ with $%
a,b\in \left[ 0,\infty \right) ,$ $0\leq a<b<\infty ,$ $m\in \left( 0,1%
\right] .$ Then%
\begin{equation}
\frac{1}{b-a}\int_{a}^{b}f\left( x\right) dx\leq \min \{f\left( \frac{b}{m}%
\right) ^{m}M\left( 1\right) ,\text{ }f\left( \frac{a}{m}\right) ^{m}T\left(
1\right) \},  \label{32}
\end{equation}%
where $M,$ $T$ are defined as in (\ref{34}). At the same time, this
inequality is the same as inequality (\ref{4}).
\end{corollary}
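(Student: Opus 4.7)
The plan is to derive this corollary as a direct specialization of the preceding theorem, and then verify by elementary algebra that the resulting bound coincides with the bound asserted in inequality~(\ref{4}).

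First I would observe that taking $\alpha=1$ in Definition~\ref{dd} recovers Definition~\ref{d} exactly, so every $m$-logarithmically convex function on $[0,b/m]$ is automatically $(1,m)$-logarithmically convex on the same interval. Hence the hypotheses of the previous theorem are satisfied with $\alpha=1$, and inequality~(\ref{31}) immediately yields~(\ref{32}) with $M(1)$ and $T(1)$ defined as in~(\ref{34}).

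Next I would verify that (\ref{32}) is the same assertion as (\ref{4}). With $\alpha=1$ and $\varphi = f(a)/f(b/m)^{m}$, the definition in (\ref{34}) gives $M(1) = (\varphi-1)/\ln\varphi$ when $\varphi\neq 1$ and $M(1)=1$ when $\varphi=1$. Multiplying by $f(b/m)^{m}$ one computes
\[
f\!\left(\tfrac{b}{m}\right)^{m} M(1) \;=\; f\!\left(\tfrac{b}{m}\right)^{m}\cdot\frac{\frac{f(a)}{f(b/m)^{m}}-1}{\ln f(a) - \ln f(b/m)^{m}} \;=\; \frac{f(a)-f(b/m)^{m}}{\ln f(a)-\ln f(b/m)^{m}},
\]
which is precisely $L(f(a),f(b/m)^{m})$, including the degenerate case $\varphi=1$ where both sides equal $f(a)=f(b/m)^{m}$ by the convention $L(p,p)=p$. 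A symmetric computation shows $f(a/m)^{m}T(1) = L(f(b),f(a/m)^{m})$, so the minimum on the right of (\ref{32}) equals the minimum on the right of (\ref{4}).

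I do not anticipate any real obstacle: the argument is essentially a substitution $\alpha=1$ into the already-established theorem together with the algebraic identification of $f(b/m)^{m}M(1)$ with the logarithmic mean. The only small point to be careful about is the case distinction $\varphi=1$ versus $\varphi<1$ (and likewise for $\ell$), which is handled uniformly by the convention $L(p,p)=p$ recalled in Section~2. No further inequalities or integrations are needed beyond those already performed in the proof of the preceding theorem.
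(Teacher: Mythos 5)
Your proposal is correct and is exactly the intended derivation: the paper states this corollary without proof as the $\alpha=1$ specialization of the preceding theorem, and your algebraic identification of $f\left(\frac{b}{m}\right)^{m}M(1)$ with $L\left(f(a),f\left(\frac{b}{m}\right)^{m}\right)$ (and likewise for $T(1)$) correctly justifies the final remark that (\ref{32}) coincides with (\ref{4}).
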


\begin{theorem}
Let $f:\left[ 0,\infty \right) \rightarrow \left( 0,\infty \right) $ be an $%
\left( \alpha ,m\right) $-logarithmically convex function on $\left[ 0,\frac{%
b}{m}\right] $ with $a,b\in \left[ 0,\infty \right) ,$ $0\leq a<b<\infty ,$ $%
\left( \alpha ,m\right) \in \left( 0,1\right] \times \left( 0,1\right] .$
Then%
\begin{equation}
\frac{1}{b-a}\int_{0}^{1}G\left( f\left( x\right) ,f\left( a+b-x\right)
\right) dx\leq \left[ f\left( \frac{a}{m}\right) f\left( \frac{b}{m}\right) %
\right] ^{m}S\left( \alpha \right)  \label{42}
\end{equation}%
where 
\begin{equation*}
\theta =f\left( a\right) f\left( b\right) \left[ f\left( \frac{a}{m}\right) %
\right] ^{-m}\left[ f\left( \frac{b}{m}\right) \right] ^{-m}
\end{equation*}%
and%
\begin{equation*}
S\left( \alpha \right) =\left\{ 
\begin{array}{c}
1,\text{ \ \ \ \ \ \ \ \ \ \ }\theta =1, \\ 
\frac{\theta ^{\alpha }-1}{\alpha \ln \theta },\text{ }0<\theta <1,%
\end{array}%
\right.
\end{equation*}%
$G\left( a,b\right) ,$ $L\left( a,b\right) $ are geometric, logarithmic mean
respectively.
\end{theorem}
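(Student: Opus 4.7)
The strategy is to mirror the proof of the previous theorem (inequality (\ref{22})), adapting it from the $m$-log-convex setting to the $(\alpha,m)$-log-convex setting by replacing every occurrence of the exponent $t$ with $t^{\alpha}$. First, I would apply the defining inequality (\ref{d2}) twice, once at the pair $(a,\frac{b}{m})$ and once at $(b,\frac{a}{m})$, obtaining for every $t\in[0,1]$
\begin{equation*}
f(ta+(1-t)b)\leq [f(a)]^{t^{\alpha}}\left[f\left(\tfrac{b}{m}\right)\right]^{m(1-t^{\alpha})},\quad f(tb+(1-t)a)\leq [f(b)]^{t^{\alpha}}\left[f\left(\tfrac{a}{m}\right)\right]^{m(1-t^{\alpha})}.
\end{equation*}
Multiplying these inequalities and then handling the geometric mean on the left exactly as in the proof of (\ref{22}) would yield
\begin{equation*}
G\!\left(f(ta+(1-t)b),f(tb+(1-t)a)\right)\leq [f(a)f(b)]^{t^{\alpha}}\left[f\left(\tfrac{a}{m}\right)f\left(\tfrac{b}{m}\right)\right]^{m(1-t^{\alpha})}.
\end{equation*}

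Integrating this bound over $t\in[0,1]$ and pulling the $t$-independent factor out would give
\begin{equation*}
\int_{0}^{1}G\!\left(f(ta+(1-t)b),f(tb+(1-t)a)\right)dt\leq \left[f\left(\tfrac{a}{m}\right)f\left(\tfrac{b}{m}\right)\right]^{m}\int_{0}^{1}\theta^{t^{\alpha}}dt,
\end{equation*}
with $\theta$ exactly as defined in the statement. The only nontrivial computation is the integral $\int_0^1\theta^{t^\alpha}\,dt$, which I would handle by the same case analysis used in the proof of (\ref{31}): when $\theta=1$ the integral equals $1$; when $0<\theta<1$, one invokes the elementary comparison $t^{\alpha}\geq \alpha t$ on $[0,1]$ for $\alpha\in(0,1]$ (which follows from $t^{\alpha-1}\geq 1\geq \alpha$), and since $\theta<1$ this reverses to $\theta^{t^{\alpha}}\leq \theta^{\alpha t}$, whose primitive yields $S(\alpha)=(\theta^{\alpha}-1)/(\alpha\ln\theta)$.

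The final step is the substitution $x=ta+(1-t)b$, which converts the left-hand integral into $\frac{1}{b-a}\int_{a}^{b}G(f(x),f(a+b-x))\,dx$ and delivers (\ref{42}). No genuinely new obstacle should appear: the argument is essentially a hybrid of the $m$-log-convex template producing inequality (\ref{22}) and the $t^{\alpha}$-versus-$\alpha t$ device used to handle the integral in the proof of inequality (\ref{31}). The only point requiring a little care is keeping track of the roles of $a$ and $b$ so that the two instances of (\ref{d2}) combine with matching exponents $t^{\alpha}$ and $m(1-t^{\alpha})$; everything else is routine.
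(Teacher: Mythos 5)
Your proposal is correct and follows essentially the same route as the paper's own proof: multiply the two instances of the $(\alpha,m)$-log-convexity inequality, take square roots, factor out $\left[ f\left( \frac{a}{m}\right) f\left( \frac{b}{m}\right) \right]^{m}$, bound $\int_{0}^{1}\theta^{t^{\alpha}}dt$ by the same case analysis on $\theta$, and finish with the substitution $x=ta+(1-t)b$. Your added justification of the comparison $t^{\alpha}\geq \alpha t$ is a welcome detail the paper leaves implicit.
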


\begin{proof}
Since $f$ is $m$-log-convex, we have that%
\begin{eqnarray}
f\left( ta+m\left( 1-t\right) \frac{b}{m}\right) &\leq &\left[ f\left(
a\right) \right] ^{t^{\alpha }}\left[ f\left( \frac{b}{m}\right) \right]
^{m\left( 1-t^{\alpha }\right) },  \label{q} \\
f\left( tb+m\left( 1-t\right) \frac{a}{m}\right) &\leq &\left[ f\left(
b\right) \right] ^{t^{\alpha }}\left[ f\left( \frac{a}{m}\right) \right]
^{m\left( 1-t^{\alpha }\right) }  \notag
\end{eqnarray}%
for all $t\in \left[ 0,1\right] $. If we multiply the above inequalities and
take square roots, we obtain%
\begin{eqnarray*}
G\left( f\left( ta+\left( 1-t\right) b\right) ,f\left( tb+\left( 1-t\right)
a\right) \right) &\leq &\left[ f\left( a\right) f\left( b\right) \right]
^{t^{\alpha }}\left[ f\left( \frac{a}{m}\right) f\left( \frac{b}{m}\right) %
\right] ^{m\left( 1-t^{\alpha }\right) } \\
&=&\left[ f\left( \frac{a}{m}\right) f\left( \frac{b}{m}\right) \right]
^{m}\theta ^{t^{\alpha }}
\end{eqnarray*}%
for all $t\in \left[ 0,1\right] $. Integrating this inequality on $[0,1]$
with respect to $t$, we obtain%
\begin{eqnarray*}
&&\int_{0}^{1}G\left( f\left( ta+\left( 1-t\right) b\right) ,f\left(
tb+\left( 1-t\right) a\right) \right) dt \\
&\leq &\left[ f\left( \frac{a}{m}\right) f\left( \frac{b}{m}\right) \right]
^{m}\int_{0}^{1}\theta ^{t^{\alpha }}dt
\end{eqnarray*}%
When $\theta =1$, we have $\int_{0}^{1}\theta ^{t^{\alpha }}dt=1.$ When $%
\theta <1$, we have%
\begin{equation*}
\int_{0}^{1}\theta ^{t^{\alpha }}dt\leq \int_{0}^{1}\theta ^{\alpha t}dt=%
\frac{\theta ^{\alpha }-1}{\alpha \ln \theta }.
\end{equation*}%
If we change the variable $x=ta+\left( 1-t\right) b$, $t\in \left[ 0,1\right]
$, we get%
\begin{equation*}
\int_{0}^{1}G\left( f\left( ta+\left( 1-t\right) b\right) ,f\left( tb+\left(
1-t\right) a\right) \right) dt=\frac{1}{b-a}\int_{0}^{1}G\left( f\left(
x\right) ,f\left( a+b-x\right) \right) dx
\end{equation*}%
and the inequality in (\ref{42}) is proved.
\end{proof}

\end{document}